\newtheorem{Theorem}{Theorem}[section]
\newtheorem{Lemma}[Theorem]{Lemma}
\newtheorem{Corollary}[Theorem]{Corollary}
\newenvironment{proof}[1][Proof]{\textbf{#1.} }{\
\rule{0.5em}{0.5em}}
\newcommand{\bC}{\mathbb{C}}
\newcommand{\ord}{\mbox{\rm ord}}
\newcommand{\ini}{\mbox{\rm in}}
\newcommand{\unit}{\hbox{$\,o\!\!\!\!$\lower0.06ex\hbox{---}\,}}
\title{Initial Newton polynomial of the discriminant}
\author{Beata Gryszka, Janusz Gwo\'zdziewicz and Adam Parusi\'nski}
\begin{document}
\maketitle
\footnotetext{
      \begin{minipage}[t]{4.2in}{\small
       2020 {\it Mathematics Subject Classification:\/} Primary 32S15;
       Secondary 32S45.\\
       Key words and phrases:  discriminant curve, Newton diagram, equisingularity}
       \end{minipage}}

\begin{abstract}
Let $(f,g)\colon (\mathbb{C}^2,0)\longrightarrow (\mathbb{C}^2,0)$ be a holomorphic 
mapping with an isolated zero. 
We show that the initial Newton polynomial of its discriminant 
is determined, up to rescalling variables, by the ideals $(f)$ and $(g)$.
\end{abstract}

\section{Introduction}
\

\medskip
Let  $\mathbb{R}_{\geq 0}$ $(\mathbb{Z}_{\geq 0})$ be the set of all non-negative real (integer) numbers. For a~power series $f=\sum_{(i,j)\in \mathbb{Z}_{\geq 0}^2}a_{i,j}x^{i}y^{j}\in\mathbb{C}[[x,y]]$ we define its {\it Newton diagram} $\Delta(f)$ as the convex hull of the union $\bigcup_{a_{i,j}\ne 0}((i,j)+\mathbb{R}_{\geq 0}^2)$. If  $S$ is the union of all compact edges of $\Delta(f)$, then the polynomial $f|_S:=\sum_{(i,j)\in S}a_{i,j}x^iy^j$ is called the {\it initial  Newton polynomial} of  $f$. We say that power series $f_1,f_2\in\mathbb{C}[[x,y]]$ are 
\textit{equal up to rescalling variables} if $f_2(x,y)=f_1(ax,by)$ for some nonzero constants $a$, $b$. 

Let $\phi=(f,g)\colon (\mathbb{C}^2,0)\rightarrow (\mathbb{C}^2,0)$ be the germ of a  holomorphic mapping with an isolated zero. To any germ $\xi$ of an analytic curve in $(\mathbb{C}^2,0)$ one associates its \textit{direct image} $\phi_{*}(\xi)$, 
see for example \cite{Casas1,Casas2}.  
The direct image of $\xi$ by $\phi$ is an analytic curve germ in the target space  uniquely determined by the following two properties:

\begin{itemize}
\item[(i)] if $\xi\subset (\mathbb{C}^2,0)$ is an irreducible curve 
then $\phi_{*}(\xi)$ is the curve of equation $H^d=0$, where 
$H=0$ is a reduced equation of the curve $\phi(\xi)$ in the target space and 
$d$ is the topological degree of the restriction $\phi|_{\xi}:\xi\to \phi(\xi)$.
\item[(ii)] if $h=h_1\cdots h_s$ is a factorization of a power series $h$ to the product of  irreducible factors in $\mathbb{C}\{x,y\}$, then $\phi_{*}(\{h=0\})$ 
is the curve $H_1\cdots H_s=0$, where the curves $H_i=0$ are the direct images of the
branches $h_i=0$ for $i=1,\dots,s$.
\end{itemize}

The direct image satisfies the projection formula: 
If $H=0$ is the direct image of $h=0$ then for any analytic curve $w=0$ in the target space 
we have the equality of intersection multiplicities $i_0(w\circ\phi,h)=i_0(w,H)$.

Let $H=0$ be the direct image of $h=0$. Suppose that the Newton diagram of $H$ has $r$ edges which are not contained in the coordinate axes. Then $H$ can be written as a product $H_1\cdots H_r$,
where the Newton diagram of each $H_i$ is elementary, i.e.  has exactly one edge not contained in the coordinate axes.  A factorization $H=H_1\cdots H_r$ induces a factorization $h=h_1\cdots h_r$
such that $H_i=0$ is the direct image of $h_i=0$ for $i=1,\dots,r$. 

We will call such a factorization of $h$ the \textit{Hironaka factorization}, because 
it follows from the projection formula applied to the coordinate axes in the target space that for every 
ireducible factor $p$ of $h_i$ the \textit{Hironaka quotient} $i_0(g,p)/i_0(f,p)$
is the inclination of the edge of $\Delta(H_i)$. Moreover the intersection multiplicities $i_0(f,h_i)$, $i_0(g,h_i)$ determine and are determined by the edges of the Newton diagram $\Delta(H)$. 

In this article we apply this general construction to a more specific situation.  We call  
${\rm Jac}(\phi)=\tfrac{\partial f}{\partial x}\tfrac{\partial g}{\partial y}-
\tfrac{\partial f}{\partial y}\tfrac{\partial g}{\partial x}=0$ the \textit{Jacobian curve} of $\phi$ and 
the direct image of the Jacobian curve is called the \textit{discriminant curve}.  
The Newton diagram of the discriminant, denoted $Q(f,g)$, is called the 
{\it Jacobian Newton diagram} of $(f,g)$ and it was introduced by Tessier in \cite{Tes1,Tes2}. 

The Jacobian curve in the case of $f=0$ smooth and transverse to $g=0$ 
is called the generic polar curve of $g$.  In this case the Hironaka quotients 
$i_0(g,h) / i_0(f,h)$ where $h$ is an irreducible factor of ${\rm Jac}(\phi)$, are called the \emph{polar quotients}. 

The polar case has been widely studied. 
The polar quotients are invariants of singularity as shown in \cite{KL}.
Teissier \cite{Tes1,Tes2} proved that the Jacobian Newton diagram in polar case 
is also a singularity  invariant. Merle \cite{Merle} described $Q(f,g)$ for $g$ irreducible. 
Eggers \cite{Egg} gave a description of $Q(f,g)$ for any $g$.  

The case where both curves $f=0$ and $g=0$ are singular is more complicated. 
Maugendre \cite{Ma} characterized Hironaka quotients of the Jacobian curve. 
Michel  \cite{Michel} gave formulas for $i_0(f,h_i)$, $i_0(g,h_i)$ using topological tools, thus determining the Jacobian Newton diagram $Q(f,g)$. Michel's result was reproved in \cite{Gw} 
where it is shown that $Q(f,g)$ depends only on the equisingularity type of pairs 
of curves $f=0$, $g=0$.

 Recall that pairs of curves $f=0$, $g=0$ and $\tilde f=0$, $\tilde g=0$ are {\it equisingular} if there exist factorisations $f=h_1\cdots h_s$, $g=h_{s+1}\cdots h_r$, $\tilde f=\tilde h_1\cdots \tilde h_{\tilde s}$, $\tilde g=\tilde h_{\tilde{s}+1}\cdots \tilde h_{\tilde r}$ into the product of   irreducible factors in $\mathbb{C}\{x,y\}$ 
such that
\begin{itemize}
\item $s=\tilde{s}$, $r=\tilde{r}$, 
\item for $i=1,\dots,r$, the semigroups 
$\Gamma(h_i):=\{i_0(h_i,w):  w\notin(h_i)\}$ and 
$\Gamma(\tilde h_i):=\{i_0(\tilde h_i,w):w\notin(\tilde h_i)\}$ 
are equal,
\item $i_0(h_i,h_j)=i_0(\tilde h_i,\tilde h_j)$ for $1\leq i<j\leq r$.
\end{itemize}

In this paper we go a step further. Our main result is.

\begin{Theorem} \label{T:main}
Let $f,g,u',u''\in\mathbb{C}\{x,y\}$ be convergent power series vanishing at zero 
such that $f$ and $g$ are coprime and let $\tilde f=(1+u')f$, $\tilde g=(1+u'')g$.
Then the initial Newton polynomials of discriminants of mappings  
$(f,g):(\mathbb{C}^2,0)\to(\mathbb{C}^2,0)$ and 
$(\tilde f,\tilde g):(\mathbb{C}^2,0)\to(\mathbb{C}^2,0)$ are 
equal. 
\end{Theorem}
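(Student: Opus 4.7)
The plan is to combine the equisingularity invariance of the Jacobian Newton diagram from \cite{Gw} with an edgewise Puiseux-coefficient analysis. The natural starting point is the identity obtained by direct differentiation,
\begin{equation*}
\mathrm{Jac}(\tilde f,\tilde g)\;=\;(1+u')(1+u'')\,\mathrm{Jac}(f,g)\;+\;f\cdot A\;+\;g\cdot B,
\end{equation*}
where $A,B\in\mathbb{C}\{x,y\}$ are explicit combinations of the first partials of $u',u'',f,g$. The crucial feature is that the error term $fA+gB$ lies in the ideal $(f,g)$, i.e.\ in the pullback of the maximal ideal of the target. Via the projection formula this ideal-theoretic control is precisely the right kind of invariant for comparing the discriminants.

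\textbf{Reducing to one edge.} Because $(f,g)$ and $(\tilde f,\tilde g)$ are equisingular, \cite{Gw} gives $Q(f,g)=Q(\tilde f,\tilde g)$, so $\Delta(H)=\Delta(\tilde H)$. Applying the Hironaka factorization recalled in the introduction yields matched product decompositions $H=H_1\cdots H_r$ and $\tilde H=\tilde H_1\cdots \tilde H_r$ with each $\Delta(H_i)=\Delta(\tilde H_i)$ elementary; the different $H_i$ contribute to disjoint compact edges of $\Delta(H)$, so the initial Newton polynomial of $H$ is the product of those of the $H_i$. It therefore suffices to fix an index $i$ with Hironaka quotient $p/q$ (in lowest terms) and to show that $H_i$ and $\tilde H_i$ coincide on their common compact edge. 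That edge polynomial has the form $u^{a_0}v^{b_0}\prod_k(v^q-c_ku^p)^{m_k}$, and each constant $c_k$ is read off as $B^q/A^p$ from the leading terms $f\circ(x(t),y(t))=At^{qd}+\cdots$ and $g\circ(x(t),y(t))=Bt^{pd}+\cdots$ along a Puiseux parametrization of an irreducible factor of $h_i$.

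\textbf{Matching leading coefficients.} The easy half of the comparison is this: because $(1+u')(0,0)=1$ and every Puiseux parametrization sends $0$ to $0$, the leading coefficients of $f$ and $\tilde f$ along the \emph{same} parametrized curve coincide, and similarly for $g$. So if one can pair each branch of $h_i$ with a branch of $\tilde h_i$ in a compatible way, the constants $c_k$ must agree. To produce the pairing I would use the deformation $\phi_t=((1+tu')f,(1+tu'')g)$, $t\in\mathbb{C}$, exploit that equisingularity is constant in $t$ so that the combinatorial type of each branch of $\mathrm{Jac}(\phi_t)$ is preserved, and then follow the branches analytically in $t$.

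\textbf{Main obstacle.} The genuine difficulty is that the branches of the two Jacobians are truly different — even a small perturbation of a branch can multiply its leading $A$ by a nontrivial factor (witness $f=y^2+x^2$ along $y=\varepsilon x$), so branch-by-branch constancy of $A(t)$ and $B(t)$ individually is false. What must be shown is that the combination $c_k(t)=B(t)^q/A(t)^p$ is nevertheless constant, and this is where the error term $fA+gB\in(f,g)$ has to do real work. I expect the heart of the proof to be a valuation estimate along each branch of $\mathrm{Jac}(\phi_t)$ showing that the contribution of $fA+gB$ to the image curve lies strictly beyond the edge of $\Delta(\tilde H_i)$ in question; combined with the equisingular preservation of the Puiseux characteristic of the branches, this should force constancy of the $c_k$ and close the argument.
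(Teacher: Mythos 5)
Your first reduction (invoking \cite{Gw} to get $\Delta(D)=\Delta(\tilde D)$ and then working edge by edge) is exactly how the paper begins its proof, but from that point on your argument has a genuine gap, and you essentially say so yourself: the entire content of the theorem is concentrated in the step you defer (``I expect the heart of the proof to be a valuation estimate\dots this should force constancy of the $c_k$''). Nothing in the proposal actually establishes that the roots $c_k$ and their multiplicities $m_k$ agree. Worse, the route you sketch for it is doubtful: you want to follow the branches of $\mathrm{Jac}(\phi_t)$ analytically in $t$ and claim ``the combinatorial type of each branch is preserved,'' but the equisingularity type of the Jacobian (polar) curve is \emph{not} an invariant of the equisingularity type of the pair $(f=0,g=0)$ --- only the Jacobian Newton diagram is (this is precisely the point of \cite{Gw}). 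So branches of $\mathrm{Jac}(\phi_t)$ may collide or change their Puiseux characteristics as $t$ varies, and there is no a priori coherent pairing along which to compare the leading coefficients $A(t)$, $B(t)$. The identity $\mathrm{Jac}(\tilde f,\tilde g)=(1+u')(1+u'')\mathrm{Jac}(f,g)+fA+gB$ is true but does not obviously help: along a branch of $\mathrm{Jac}(\phi_t)$ the function $\mathrm{Jac}(\phi_t)$ vanishes identically, so the intended ``valuation estimate'' comparing the error term to the Jacobian along that branch is not even well posed as stated.

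The paper avoids branch-matching entirely. Its key device is a family of test curves $H_t=(v^k-tu^l)^N-u^{l(N+1)}$ in the target: Lemma \ref{L:I} shows that for $N\gg1$ the multiplicity $\nu_j$ of $(v^k-t_ju^l)$ in $\ini_w D$ equals $\bigl(i_0(D,H_{t_j})-i_0(D,H_t)\bigr)/kl$ for generic $t$, and Casas' formula (Theorem \ref{Th:Casas}) converts these intersection numbers into Milnor numbers of the pullbacks $h_t=(g^k-tf^l)^N-f^{l(N+1)}$. The problem then reduces to showing $\mu(h_t)=\mu(\tilde h_t)$ for all $t\ne0$, which is the Key Lemma \ref{L:Key}: the curves $(g-f)^N-f^{N+1}=0$ and $(\tilde g-\tilde f)^N-\tilde f^{N+1}=0$ are equisingular, proved by taking a common resolution of $fg(g-f)(\tilde g-\tilde f)=0$ and checking, point by point on the exceptional divisor, that the total and proper transforms have the same local equations up to units (Zariski's criterion). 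If you want to salvage your approach you would need to replace the branch-following argument by some such intersection-theoretic or equisingularity-theoretic mechanism; as written, the proposal identifies the difficulty correctly but does not overcome it.
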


Note that a factorization of the initial Newton polynomial of the discriminant $D$ 
of $\phi=(f,g)$ into irreducible factors, gives a factorization $D=D_1\cdots D_s$ such that  the initial Newton polynomial of each factor $D_i$ is a power of an irreducible polynomial. 
This induces a factorization of ${\rm Jac}(\phi)$  that is usually more subtle than the Hironaka factorization.

The structure of the paper is as follows.  In Section \ref{S:Factorization} we study the initial weighted form of a nonzero power series.  We establish in Lemma \ref{L:I} a relation between the intersection multiplicities of this power series with certain test polynomials and the multiplicities of the irreducible factors of its initial weighted form. In Corollary \ref{C:Milnor}, thanks to Casas' formula, that we recall for the reader convenience, these multiplicities are expressed in terms of classical equisingularity invariants (Milnor numbers). This allows us in Section \ref{S:Key lemma} 
to show the key lemma, Lemma \ref{L:Key}.  It states that some specific plane curve singularities, constructed in terms of $f$, $g$, $\tilde f$, $ \tilde g$, and the test polynomials, are 
equisingular.  The key lemma follows from a classical  Zariski equisingularity criterion of
\cite{ZarSEI}. The main result, Theorem \ref{T:main}, is proven Section \ref{S:proof}.  
Finally we give several corollaries of the main result.  This includes a characterization of the  atypical values of the pencils $f^k=tg^l$ and of the asymptotic critical values of the meromorphic functions $f^k/g^l$.  We also propose an alternative proof of \cite[Theorem 6.6]{GaGwLe} by an argument similar to the proof of the main result.

\section{Factorization of the initial weighted form}\label{S:Factorization}
Let $D(u,v)$ be a nonzero complex power series and 
let  $w=(k,l)$ be a weight vector, where $k$, $l$ are coprime positive integers. Then $D$ can be written as the sum of 
quasi-homogeneous polynomials 
$D=D_m+D_{m+1}+{}\cdots$, where $D_m\neq 0$ and $\deg_w D_i=i$ for $i\geq m$. 
Write $D_m$ as a product

\begin{equation} \label{E:w}
   D_m(u,v)=Cu^{\nu_{0}}v^{\nu_{n+1}}\prod_{i=1}^n(v^k-t_i u^l)^{\nu_i},
\end{equation}
where $t_i\neq0$ and $t_i\neq t_j$ for $i\neq j$. 

The aim of this section is to express the multiplicities  
$\nu_i$ for $1\leq i\leq n$ 
by the intersection multiplicities of $D$ with certain polynomials.

\begin{Lemma} \label{L:I}
Let $H_t=( v^k-t u^l)^N-u^{l(N+1)}$. 
Then for a sufficiently large integer~$N$ 
and for every $t\in \mathbb C^*$ such that $t\neq t_i$ for $1\leq i\leq n$, one has
$$ \nu_jkl=i_0(D,H_{t_j})-i_0(D,H_t). $$
\end{Lemma}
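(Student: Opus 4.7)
My plan is to reduce the computation of $i_0(D, H_t)$ to one involving only the quasi-homogeneous initial part $D_m$, and then evaluate the latter by multiplicativity. Concretely, I will establish: (a) $i_0(D, H_t) = i_0(D_m, H_t)$ for all relevant $t$, provided $N$ is sufficiently large; and (b) $i_0(D_m, H_{t_j}) - i_0(D_m, H_t) = \nu_j\,kl$ directly from the factorization \eqref{E:w}.

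Part (b) is short. Additivity of $i_0$ over the factorization \eqref{E:w} reduces the task to three auxiliary computations: $i_0(u, H_t) = \ord_v H_t(0, v) = kN$; $i_0(v, H_t) = \ord_u H_t(u, 0) = lN$, using $t \neq 0$; and $i_0(v^k - t_i u^l, H_t)$. For the last, since $\gcd(k,l) = 1$, the curve $v^k = t_i u^l$ is irreducible with parametrization $(u, v) = (\tau^k, t_i^{1/k}\tau^l)$, and substitution yields $H_t(\tau^k, t_i^{1/k}\tau^l) = (t_i-t)^N\tau^{klN} - \tau^{kl(N+1)}$; its $\tau$-order is $klN$ when $t \neq t_i$ and $kl(N+1)$ when $t = t_i$. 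Plugging these into the additivity formula and forming the difference $i_0(D_m, H_{t_j}) - i_0(D_m, H_t)$, the $kN\nu_0$, $lN\nu_{n+1}$ and $klN\sum_i \nu_i$ contributions cancel and only $\nu_j\,kl$ remains.

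Part (a) is where the size of $N$ enters. Each Puiseux branch of $H_t$ has leading behaviour $v = \alpha u^{l/k}(1 + O(u^{l/N}))$ with $\alpha^k = t$, obtained by solving $v^k - tu^l = \omega u^{l(N+1)/N}$ for an $N$-th root of unity $\omega$. Writing $u = \tau^p$ on a branch (with $p$ its $u$-multiplicity) and using $w$-quasi-homogeneity of each $D_{m+r}$, the substitution of the branch into $D_{m+r}$ has $\tau$-order at least $p(m+r)/k$. The contribution from $D_m$ has leading $\tau$-order $pm/k$ when $D_m(1, \alpha) \neq 0$, and $p(m/k + \nu_j l/N)$ when $\alpha^k = t_j$, the extra summand $\nu_j l/N$ reflecting that $(v^k - t_j u^l)^{\nu_j}$ vanishes on the branch to order $\nu_j(l + l/N)$ rather than the naive $\nu_j l$. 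For $N > kl\max_j \nu_j$, one has $p(m/k + \nu_j l/N) < p(m+1)/k$, so the $D_m$ contribution strictly dominates all $D_{m+r}$ contributions on every branch of every $H_t$. Summing over branches yields $i_0(D, H_t) = i_0(D_m, H_t)$.

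The main obstacle is making part (a) uniform: different $t$'s may a priori produce branches of $H_t$ with different ramification indices $p$ and different branch counts, so I need a single lower bound on $N$ that works for all of them simultaneously. Choosing $N$ coprime to $kl$ and larger than $kl\max_j \nu_j$ should simultaneously regularize the branch structure of $H_t$ and secure the dominance inequality uniformly in $t$ and $j$. Combining (a) and (b) then gives the claimed identity.
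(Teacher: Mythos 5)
Your argument is correct, and it reaches the same numerical identity by a genuinely different decomposition. The paper factors $D=PQ$ by the quasi-homogeneous Hensel lemma, with $\ini_w P=(v^k-t_ju^l)^{\nu_j}$, and then evaluates $i_0(P,H_\bullet)$ and $i_0(Q,H_\bullet)$ separately by Zeuthen's rule; you instead keep $D$ whole, prove $i_0(D,H_t)=i_0(D_m,H_t)$ by comparing orders of $D_m$ and of the higher quasi-homogeneous pieces along the Puiseux branches of $H_t$, and then compute $i_0(D_m,H_\bullet)$ purely additively from the explicit factorization (1). The two proofs hinge on the identical estimate — on a branch $v=\alpha u^{l/k}(1+O(u^{l/N}))$ of $H_{t_j}$ the factor $(v^k-t_ju^l)^{\nu_j}$ contributes order $\nu_j l(N+1)/N$ rather than $\nu_j l$, and dominance of the initial part requires exactly $N>\nu_jkl$ — so the content is the same, but your version avoids Hensel's lemma entirely and makes part (b) a transparent resultant-free computation, at the cost of having to control all branches of $H_t$ at once rather than only the factor $P$. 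What the paper's route buys is that the ``uninteresting'' part $Q$ is dispatched by a one-line coprimality remark. Two small comments on your write-up: the ``main obstacle'' you flag at the end is not actually an obstacle, since Zeuthen's rule sums fractional $u$-orders over all $kN$ Puiseux roots regardless of how they group into irreducible factors, and your dominance inequality $p(m/k+\nu_jl/N)<p(m+1)/k$ is independent of the ramification index $p$ and of $t$; consequently the extra hypothesis that $N$ be coprime to $kl$ is unnecessary (and is better dropped, since the lemma asserts the identity for every sufficiently large $N$).
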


\begin{proof}
For any power series $F(u,v)$ denote by $\ini_w F$ its weighted initial form  
with respect to the weight vector $w$. By the quasi-homogeneous version of Hensel's lemma 
(see e.g. \cite[Lemma~A.1]{Artal}) the power series $D$ factors to a product $PQ$, where
$\ini_w P=(v^k- t_j  u^l)^{\nu_j}$ and 
$\ini_w Q$ is not divisible by $v^k- t_j  u^l$.

\smallskip
Take any $N>\nu_jkl$.
In order to compute the intersection multiplicity $i_0(P,H_{t_j})$ 
we will use the classical Zeuten's Rule 
$ i_0(P,H_{t_j})=\sum \ord\, P(u,\alpha_i(u))$,
where the sum runs over all Newton-Puiseux roots $v=\alpha_i(u)$ of $H_{t_j}(u,v)=0$. 

\smallskip
Solving the equation $(v^k -t_j u^l)^N-u^{l(N+1)}=0$ with respect to $v$ we get 
$Nk$ solutions $v=\alpha_i(u)$ 
of the form $v=\omega\sqrt[k]{t_j}u^{l/k}+\mbox{\it higher order terms}$, 
where $\omega^k=1$ and $\sqrt[k]{t_j}$ is a fixed root of a polynomial $Y^k-t_j$.

Writing $P(u,v)$ as a power series 
$(v^k- t_j u^l)^{\nu_j}+\sum_{ak+bl> \nu_j kl} c_{a,b}u^av^b$
and substituting $v=\alpha_i(u)$ we get
$$ P(u,\alpha_i(u))= \epsilon u^{\nu_j l(N+1)/N}+\sum_{ak+bl> \nu_jkl} c_{a,b}u^a\alpha_i(u)^b,
$$
where $\epsilon^N=1$.

\medskip
The term $\epsilon u^{\nu_j l(N+1)/N}$ on the right-hand side of the above formula  
has the smallest order.
Indeed, by the inequalities $ak+bl\geq \nu_j kl+1$, $N>\nu_j kl$ 
we get 
$\ord\, u^a\alpha_i(u)^b = a+b(l/k) = (ak+bl)/k \geq 
\nu_j l(\nu_j kl+1)/(\nu_j kl) > \nu_jl(N+1)/N$. 
We have shown that $\ord\, P(u,\alpha_i(u))=\nu_jl(N+1)/N$ for every Newton-Puiseux root 
$v=\alpha_i(u)$ of $H_{t_j}(u,v)=0$. Hence $i_0(P,H_{t_j})=(N+1)\nu_jkl$.

\medskip
Computing the intersection multiplicity $i_0(Q,H_{t_j})$ is simpler.  Since the polynomials 
$\ini_w Q$ and $\ini_w H_{t_j}=(v^k -t_j u^l)^N$ are coprime,  we have 
$\ord\, Q(u,\alpha_i(u))=\ord\, \ini_w Q(u,\omega\sqrt[k]{t_j}u^{l/k})=(1/k)\deg_w(\ini_w Q)$
for any Newton-Puiseux root~$\alpha_i(u)$ of $H_{t_j}(u,v)=0$. 
By Zeuten's Rule we get $i_0(Q,H_{t_j}) = N\deg_w(\ini_w Q)$. 

Analogously we obtain $i_0(Q,H_t) = N\deg_w(\ini_w Q)$ and 
$i_0(P,H_t) = N\deg_w(\ini_w P) = N\nu_jkl$.

Finally 
$i_0(D,H_{t_j})-i_0(D,H_t) = (i_0(P,H_{t_j})+i_0(Q,H_{t_j}))-(i_0(P,H_t)+i_0(Q,H_t)) =\nu_jkl$ 
which ends the proof. 
\end{proof}

\section{Casas' formula}\label{S:Cas}
Consider the following result of Casas-Alvero, which provides  a very useful formula.

\begin{Theorem}[\cite{Casas1} Theorem~3.2] \label{Th:Casas}
Let $(f,g):(\mathbb{C}^2,0)\to(\mathbb{C}^2,0)$ 
be the germ of a holomorphic mapping such that
$(f,g)^{-1}(0,0)=\{(0,0)\}$.
Let $D(u,v)=0$ be the discriminant 
 of $(f,g)$. Take any curve germ $H(u,v)=0$ and let $h(x,y)=H(f(x,y),g(x,y))$. Then 
$$\mu (h)-1=i_0(f,g)[\mu (H)-1]+i_0(D,H), $$ 
where $\mu (h)$   denotes the Milnor number of the curve $h=0$ at zero. 
\end{Theorem}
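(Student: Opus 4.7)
The plan is to deduce Casas' formula from the algebraic identity $\mu(h)=\dim_\bC \mathcal{O}/(h_x,h_y)$ combined with the chain rule
\[
\begin{pmatrix} h_x \\ h_y \end{pmatrix} = \begin{pmatrix} f_x & g_x \\ f_y & g_y \end{pmatrix}\begin{pmatrix} \tilde H_u \\ \tilde H_v \end{pmatrix},
\]
where $\tilde H_u=H_u\circ\phi$, $\tilde H_v=H_v\circ\phi$, and the coefficient matrix has determinant $\mathrm{Jac}(\phi)$.

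Two standard ingredients enter. Since $\phi$ is finite at $0$ of degree $i_0(f,g)=\dim_\bC \mathcal{O}/(f,g)$, conservation of number gives the pullback identity $i_0(P\circ\phi,Q\circ\phi)=i_0(f,g)\cdot i_0(P,Q)$ whenever $V(P,Q)=\{0\}$; applied to $(P,Q)=(H_u,H_v)$ this yields $\dim_\bC \mathcal{O}/(\tilde H_u,\tilde H_v)=i_0(f,g)\,\mu(H)$. The projection formula, applied to the Jacobian curve (whose direct image is $D=0$) and the target germ $H=0$, gives $i_0(\mathrm{Jac}(\phi),h)=i_0(D,H)$.

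Cramer applied to the chain-rule matrix yields the sandwich
\[
\mathrm{Jac}(\phi)\cdot (\tilde H_u,\tilde H_v)\subseteq (h_x,h_y)\subseteq (\tilde H_u,\tilde H_v),
\]
so the short exact sequence
\[
0\to (\tilde H_u,\tilde H_v)/(h_x,h_y)\to \mathcal{O}/(h_x,h_y)\to \mathcal{O}/(\tilde H_u,\tilde H_v)\to 0
\]
reduces Casas' identity to the length formula
\[
\mathrm{length}\bigl((\tilde H_u,\tilde H_v)/(h_x,h_y)\bigr)=i_0(D,H)-i_0(f,g)+1.
\]
Using that the syzygies of the regular sequence $(\tilde H_u,\tilde H_v)$ are generated by the Koszul relation $(\tilde H_v,-\tilde H_u)$, the left-hand quotient is identified with the cokernel of the $2\times 3$ matrix
\[
\widetilde M = \begin{pmatrix} f_x & f_y & \tilde H_v \\ g_x & g_y & -\tilde H_u \end{pmatrix}\colon\mathcal{O}^3\to\mathcal{O}^2,
\]
whose $2\times 2$ minors are precisely $\mathrm{Jac}(\phi)$, $-h_x$, $-h_y$. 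Its length may be extracted from the Buchsbaum--Rim complex attached to this presentation.

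The main obstacle will be this last length computation, in particular isolating the $-i_0(f,g)+1$ correction term cleanly. A more hands-on route is to verify first the case in which $H$ is irreducible via a direct Puiseux-parametrisation argument (using Zeuthen's rule and the Milnor formula $\mu=2\delta-r+1$), and then to extend to reducible $H$ by multiplicativity along a factorisation $H=H_1\cdots H_r$: both sides of Casas' formula respect such a factorisation modulo pairwise intersection corrections, and these corrections balance thanks to the pullback identity $i_0(H_i\circ\phi,H_j\circ\phi)=i_0(f,g)\cdot i_0(H_i,H_j)$, reducing the whole statement to the irreducible-target case.
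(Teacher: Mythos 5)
The paper does not actually prove this statement: it is quoted verbatim from Casas-Alvero (reference [Ca1], Theorem~3.2) and used as a black box, so there is no in-paper proof to compare yours with; your attempt has to stand on its own.

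Your setup is sound. The chain rule, the two exact ingredients (the multiplicativity $i_0(P\circ\phi,Q\circ\phi)=i_0(f,g)\,i_0(P,Q)$ for the finite map $\phi$, and the projection formula giving $i_0(\mathrm{Jac}(\phi),h)=i_0(D,H)$), the Cramer sandwich, the identification of $(\tilde H_u,\tilde H_v)/(h_x,h_y)$ with $\mathrm{coker}(\widetilde M)$, and the reduction from reducible to irreducible $H$ via $\mu(PQ)-1=(\mu(P)-1)+(\mu(Q)-1)+2i_0(P,Q)$ are all correct (modulo the standing hypothesis $i_0(D,H)<\infty$, needed so that $h$ is reduced and $\mu(h)$ is finite). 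But both of your routes stop exactly where the content of the theorem lies, and you say so yourself. In the first route the missing step is the length of $\mathrm{coker}(\widetilde M)$: dualizing the Buchsbaum--Rim resolution $0\to\mathcal{O}\to\mathcal{O}^3\to\mathcal{O}^2\to\mathrm{coker}(\widetilde M)\to0$ and using $\mathrm{length}\,\mathrm{Ext}^2(-,\mathcal{O})=\mathrm{length}(-)$ identifies this length with $\dim_\bC\mathcal{O}/(\mathrm{Jac}(\phi),h_x,h_y)$, but you are then left having to prove $\dim_\bC\mathcal{O}/(\mathrm{Jac}(\phi),h_x,h_y)=i_0(\mathrm{Jac}(\phi),h)-i_0(f,g)+1$, which is essentially the theorem again in disguise. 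In the second route the irreducible case amounts to the identity $2\delta(h)-r=i_0(f,g)\,(2\delta(H)-1)+i_0(D,H)$, where $r$ is the number of branches of $h$ lying over $V(H)$; this is a genuine local conductor--discriminant (Riemann--Hurwitz) computation along the normalizations of those branches, and it is not carried out. So what you have is a plausible plan whose decisive step is missing in both branches. (For the record, the shortest argument is topological: $\phi$ restricts to a degree-$i_0(f,g)$ branched covering of Milnor fibres $F_h\to F_H$, branched over $F_H\cap V(D)$ with total ramification $i_0(D,H)$, and the formula is Riemann--Hurwitz for Euler characteristics, $1-\mu(h)=i_0(f,g)(1-\mu(H))-i_0(D,H)$.)
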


From the above theorem we obtain a crucial corollary, which is used in the proof of the main result of this article.

\begin{Corollary} \label{C:Milnor}
Let $(f,g):(\mathbb{C}^2,0)\to(\mathbb{C}^2,0)$ be the germ of a holomorphic mapping 
such that $(f,g)^{-1}(0,0)=\{(0,0)\}$.  
Let $D(u,v)=0$ be the discriminant curve of $(f,g)$ and $h_t=(g^k-tf^l)^N-f^{l(N+1)}$ for $N>1$. Then, under the notation of (\ref{E:w}), for $N\gg1$ and $t\in \mathbb C^*$ different from  $t_1,\dots,t_n$,  we have $\nu_j kl=\mu (h_{t_j})-\mu (h_t)$.
\end{Corollary}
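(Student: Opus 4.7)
The plan is to combine Casas' formula (Theorem~\ref{Th:Casas}) with Lemma~\ref{L:I} in the most direct way possible. First I would apply Theorem~\ref{Th:Casas} twice, once with $H=H_{t_j}$ and once with $H=H_t$, giving
\begin{align*}
\mu(h_{t_j})-1 &= i_0(f,g)\bigl[\mu(H_{t_j})-1\bigr]+i_0(D,H_{t_j}),\\
\mu(h_t)-1     &= i_0(f,g)\bigl[\mu(H_t)-1\bigr]+i_0(D,H_t).
\end{align*}
Subtracting these two identities kills the $-1$ and produces
\[
\mu(h_{t_j})-\mu(h_t)=i_0(f,g)\bigl[\mu(H_{t_j})-\mu(H_t)\bigr]+\bigl[i_0(D,H_{t_j})-i_0(D,H_t)\bigr].
\]
By Lemma~\ref{L:I}, the bracketed difference of intersection multiplicities with $D$ equals $\nu_j kl$ for $N\gg1$. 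So the whole matter reduces to checking that the first term on the right vanishes, i.e.\ that $\mu(H_{t_j})=\mu(H_t)$.

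The key observation is that for $t\in\mathbb{C}^*$, all the curves $H_t(u,v)=(v^k-tu^l)^N-u^{l(N+1)}$ are isomorphic as plane curve singularities at the origin, so in particular they share a common Milnor number. Concretely, I would exhibit a rescaling of variables $u\mapsto au$, $v\mapsto bv$ turning $H_t$ into a nonzero scalar multiple of $H_1$: setting $c=b^k=ta^l$ forces $H_t(au,bv)=c^N(v^k-u^l)^N-a^{l(N+1)}u^{l(N+1)}$, and the further requirement $c^N=a^{l(N+1)}$ reduces to $a^l=t^N$, which can be solved by choosing suitable $l$-th and $k$-th roots of $t$. With these choices the resulting equation is proportional to $H_1(u,v)$, so $\{H_t=0\}$ and $\{H_1=0\}$ agree as germs, hence $\mu(H_t)$ is constant in $t\in\mathbb{C}^*$.

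Putting this together yields $\mu(h_{t_j})-\mu(h_t)=\nu_j kl$. The main (mild) obstacle is really just the rescaling step: once one sees that the family $\{H_t\}_{t\in\mathbb{C}^*}$ is a single equivalence class under a scaling of coordinates, the rest is a mechanical subtraction of the two instances of Casas' formula followed by a quotation of Lemma~\ref{L:I}.
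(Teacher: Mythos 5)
Your proof is correct and follows essentially the same route as the paper, whose entire argument is ``apply Casas' formula to $H_{t_j}$ and $H_t$ as defined in Lemma~\ref{L:I}'' — i.e.\ exactly your subtraction of the two instances followed by Lemma~\ref{L:I}. Your rescaling computation showing $\mu(H_{t_j})=\mu(H_t)$ (with $a^l=t^N$, $b^k=t^{N+1}$, giving $H_t(au,bv)=t^{N(N+1)}H_1(u,v)$) correctly supplies the one detail the paper leaves implicit.
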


\begin{proof} It is enough to apply Casas' formula to $H_{t_j}$ and $H_{t}$ as defined in Lemma~\ref{L:I}. 
\end{proof}

\section{Key lemma and the proof of the main result}\label{S:Key lemma}
Before we present the proof of the main result, we provide the following lemma.  This lemma is interesting not only because of its applications, but also because of the methods, which are used in its proof. At the end of this paper we show another application of these methods, presenting a new proof of some known fact.
\begin{Lemma}[Key Lemma]\label{L:Key}
Let $f,g,u',u''\in\mathbb{C}\{x,y\}$ be convergent power series vanishing at zero 
such that $f$ and $g$ are coprime and let $\tilde f=(1+u')f$, $\tilde g=(1+u'')g$.
Then for sufficiently large integer $N$ the curves 
$(g-f)^N-f^{N+1}=0$ and $(\tilde g-\tilde f)^N-\tilde f^{N+1}=0$ are equisingular.
\end{Lemma}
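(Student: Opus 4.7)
My plan is to view the two curves as fibers over $s=0$ and $s=1$ of a one-parameter analytic family and apply the classical Zariski equisingularity criterion \cite{ZarSEI} to the whole family. Concretely, set $f_s = (1+su')f$, $g_s = (1+su'')g$, and
\[ h_s(x,y) = (g_s - f_s)^N - f_s^{N+1}, \qquad s \in \mathbb{C}. \]
Then $h_0 = (g-f)^N - f^{N+1}$ and $h_1 = (\tilde g - \tilde f)^N - \tilde f^{N+1}$ are the two curves in the statement. For $N$ large enough, each $h_s = 0$ has an isolated singularity at the origin (coprimality of $f$ and $g$ persists for $f_s$ and $g_s$), so the lemma follows once one proves the family $\{h_s = 0\}_{s \in \mathbb{C}}$ is equisingular along the trivial section $\{(0,0)\}\times\mathbb{C}_s$. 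A first encouraging sign is equimultiplicity at the origin: $\mathrm{mult}(h_s) = \min\bigl(N\,\mathrm{mult}(g_s-f_s),\,(N+1)\,\mathrm{mult}(f_s)\bigr)$, and since $1+su'$, $1+su''$ are units, $\mathrm{mult}(f_s) = \mathrm{mult}(f)$, while the perturbation $s(u''g - u'f)$ has strictly higher order than $g-f$, giving $\mathrm{mult}(g_s - f_s) = \mathrm{mult}(g-f)$.

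In the plane-curve setting, the Zariski criterion is equivalent, through L\^e-Ramanujam-Teissier, to the constancy of the Milnor number along the family. I would access this via Casas' formula (Theorem~\ref{Th:Casas}) applied to $\phi_s = (f_s, g_s)$ with the test curve $H(u,v) = (v-u)^N - u^{N+1}$:
\[ \mu(h_s) - 1 = i_0(f_s, g_s)\bigl(\mu(H) - 1\bigr) + i_0(D_s, H), \]
where $D_s$ is the discriminant of $\phi_s$. Because $1+su'$ and $1+su''$ are units, $i_0(f_s, g_s) = i_0(f, g)$ is independent of $s$, and $\mu(H)$ is intrinsic, so the whole question reduces to showing that $i_0(D_s, H)$ does not depend on $s$.

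The main obstacle is precisely this last constancy. The plan is to exploit the projection formula to rewrite $i_0(D_s, H) = i_0(\mathrm{Jac}(\phi_s), h_s)$, and then to expand $\mathrm{Jac}(\phi_s) = (1+su')(1+su'')\,\mathrm{Jac}(\phi) + R_s$ with $R_s$ in the ideal $(f, g) = (f_s, g_s)$. Using that every branch $\gamma$ of $h_s = 0$ satisfies $N\,\mathrm{ord}(g_s - f_s)|_\gamma = (N+1)\,\mathrm{ord}(f_s)|_\gamma$ and that the unit factors preserve orders of vanishing along $\gamma$, a careful order-count along each branch should show that the $R_s$-contribution is of strictly higher order and can be dropped, producing $i_0(\mathrm{Jac}(\phi_s), h_s) = i_0(\mathrm{Jac}(\phi), h_s)$. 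A parallel branch-tracking comparison between the branches of $h_s = 0$ and of $h_0 = 0$ (both mapping to the single branch of $H = 0$ via $\phi_s$ and $\phi$ respectively) then yields $i_0(\mathrm{Jac}(\phi), h_s) = i_0(\mathrm{Jac}(\phi), h_0)$, delivering the constancy of $\mu(h_s)$ and hence the desired equisingularity.
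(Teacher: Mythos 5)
Your reduction runs in the wrong direction and leaves the hard step unproved. After applying Casas' formula to $\phi_s=(f_s,g_s)$ with $H=(v-u)^N-u^{N+1}$, everything hinges on the constancy in $s$ of $i_0(D_s,H)$. But by Lemma~\ref{L:I}, for $N$ large this intersection number computes (up to the generic term, which is controlled by the Newton diagram of $D_s$) exactly the multiplicity of $v-u$ in the initial weighted form of $D_s$ — that is, the quantity whose invariance is the content of Theorem~\ref{T:main}. The paper's architecture is: Key Lemma $\Rightarrow$ Corollary~\ref{C:sliding} $\Rightarrow$ (with Casas) $\mu(h_t)=\mu(\tilde h_t)$ $\Rightarrow$ invariance of the $\nu_j$. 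You are proposing to prove the Key Lemma \emph{from} the invariance of $i_0(D_s,H)$, so you must establish that invariance independently, and the sketch you give does not do so. The claimed inequality $\mathrm{ord}_\gamma(R_s)>\mathrm{ord}_\gamma(\mathrm{Jac}(\phi))$ along every branch $\gamma$ of $h_s=0$ is only asserted; it is most delicate precisely when $t=1$ is a special (discriminantal) direction of the pencil $g/f$, which is the case of interest (and the case needed in Corollary~\ref{C:sliding}, where one must allow $t=t_j$). The second constancy, $i_0(\mathrm{Jac}(\phi),h_s)=i_0(\mathrm{Jac}(\phi),h_0)$, is likewise only asserted; intersection numbers of a fixed curve with a deforming family are in general only semicontinuous. (The equivalence of $\mu$-constancy with equisingularity for families of plane curves is fine, so the strategy is not hopeless in principle, but as written it begs the question.)

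There is also a concrete computational error in your "encouraging sign": from $g_s-f_s=(g-f)+s(u''g-u'f)$ you cannot conclude $\mathrm{mult}(g_s-f_s)=\mathrm{mult}(g-f)$, because $\mathrm{ord}(g-f)$ may exceed $\mathrm{ord}(u''g-u'f)$ when $f$ and $g$ have a large contact. For instance $f=x$, $g=x+y^{10}$, $u'=0$, $u''=y$ gives $g-f=y^{10}$ of order $10$ while $\tilde g-\tilde f=xy+y^{10}+y^{11}$ has order $2$. (Equimultiplicity of $h_s$ itself still holds there, but not by your argument.) This illustrates why the paper avoids working on $(\mathbb{C}^2,0)$ directly: its proof resolves the curve $fg(g-f)(\tilde g-\tilde f)=0$, compares the orders of $f,g,g-f$ and $\tilde f,\tilde g,\tilde g-\tilde f$ along each component of the total transform (where the units $1+u'$, $1+u''$ genuinely do not change orders, and restrict to $1$ on the exceptional divisor), and concludes by Zariski's blow-up criterion — never invoking the discriminant at all.
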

\begin{proof}
By Zariski \cite{ZarSEI} two plane curve singularities (at the origin) are equivalent if there is a bijection between their branches that makes both their proper and total transforms by the blowing-up of the origin equivalent. In order to apply this criterion, we construct a common resolution of $h:=(g-f)^N-f^{N+1}=0$ and $\tilde h:=(\tilde g-\tilde f)^N-\tilde f^{N+1}=0$ with locally equivalent singularities (for both the proper and the total transforms). Then it follows that $h$ and $\tilde h$ are equisingular.

Let $R:M\to(\mathbb{C}^2,0)$ be a  resolution of singularities of the curve $fg(g-f)(\tilde g-\tilde f)=0$. The total transform $R^{-1}(\{fg(g-f)(\tilde g-\tilde f)=0\})$ 
can be written as the union of irreducible components
$E_1\cup\dots\cup E_n\cup E_{n+1}\cup\dots\cup E_m$,
where $E=E_1\cup\dots\cup E_n$ is the exceptional divisor $R^{-1}(0)$ 
and $E_{n+1}$, \dots, $E_m$  are the components of the proper transform  of the curve $fg(g-f)(\tilde g-\tilde f)=0$.
By abuse of notation we will use the same symbols for germs of functions on 
$(\mathbb{C}^2,0)$ and for their pull-backs to $M$.

For $i=1,\dots,m$ we denote the orders of $f,g,g-f, \tilde f, \tilde g, \tilde g-\tilde f$ along $E_i$ by $a_i,b_i, c_i, \tilde a_i, \tilde b_i, \tilde c_i$, respectively.

Take any point  $P$ of  $E_i$, where $i\in\{1,\dots,m\}$. 
If $P$ is a smooth point of the total divisor, and $E_i$ in a neighborhood of $P$ 
has a local equation $x=0$, then $f=Ax^{a_i}$  
and $\tilde f=(1+u')f=\tilde Ax^{\tilde a_i}$, where $A$ and $\tilde A$ do not vanish at $P$. 
Hence $\tilde a_i=a_i$. Moreover, if $E_i$ is a component of the exceptional divisor, 
then $\tilde A|_{E_i}=A|_{E_i}$, since $u'|_{E_i}=0.$ Similarly we obtain $\tilde b_i=b_i$.

Choose $N$ big enough so that for all $i\in \{1,\dots,n\}$ 
$Nc_i> (N+1)a_i$ if $c_i>a_i$ and $N\tilde c_i> (N+1)a_i$ if $\tilde c_i>a_i$.  
This is for instance the case if $N>\max\{a_i: 1\leq i\leq n\}$.

Then the orders of meromorphic functions 
$F=(g-f)^N/f^{N+1}$ and $\tilde F=(\tilde g-\tilde f)^N/\tilde f^{N+1}$ 
along the components of the exceptional divisor are different from zero.  Hence the proper preimage 
of the curve $h=0$ (resp. $\tilde h=0$), 
which, in the complement of the total transform, is given by $F=1$ (resp. $\tilde F=1$) 
does not intersect the exceptional divisor at the smooth points of the total transform.

Take the intersection point $P$ of a component $E_i$ of the exceptional divisor with 
$E_j$, where $1\leq j\leq m$, $j\neq i$.
Choose  a local analytic coordinate system $(x,y)$ centered at $P$ such that 
$E_i$ has the equation $x=0$ and $E_j$ has the equation $y=0$. 
In these coordinates 
$f=A x^{a_i}y^{a_j}$, 
$g=B x^{b_i}y^{b_j}$, 
$g-f=Cx^{c_i}y^{c_j}$, 
$\tilde f=\tilde Ax^{a_i}y^{a_j}$, 
$\tilde g=\tilde B x^{b_i}y^{b_j}$, 
and~$\tilde g-\tilde f=\tilde C x^{\tilde c_i}y^{\tilde c_j}$,
where $A,B,C,\tilde A,\tilde B,\tilde C$ are germs of holomorphic functions that do not vanish at zero.
It follows from \cite{ZarLincei}, the first paragraph of proof of Proposition 2.1, see also \cite[Lemma 4.7]{B-M}, 
that the set of pairs $\{(a_i,a_j),(b_i,b_j),(c_i,c_j)\}$ is totally ordered, 
with respect to the partial order $(a,a') \le (b,b') $ if $a\le b$ and $a\le b'$,  
and two of these pairs are equal and are less than or equal to the third one.

In the sequel we denote by $\unit$ the germ of any holomorphic function that does 
not vanish at the origin.

Let us write the equation of 
$$h=(g-f)^N-f^{N+1}=\unit x^{Nc_i}y^{Nc_j}-\unit x^{(N+1)a_i}y^{(N+1)a_j}$$
in a neighborhood of~$P$.  

We have the following possibilities:

\medskip (I) $(b_i,b_j)>(a_i,a_j)$ \par
Then $(c_i,c_j)= (\tilde c_i, \tilde c_j)  =(a_i,a_j)$ and we get 
$$h= \unit \tilde h = \unit x^{N a_i}y^{N a_j}.$$

\medskip (II) $(b_i,b_j)<(a_i,a_j)$ \par
Then $(c_i,c_j)= (\tilde c_i, \tilde c_j) =(b_i,b_j)$ and we get 
$$h= \unit \tilde h = \unit x^{N b_i}y^{N b_j}.$$

\medskip (III) $(b_i,b_j)=(a_i,a_j)$\par
Then $(c_i,c_j)\geq (a_i,a_j)$ and $(\tilde c_i, \tilde c_j)\geq (a_i,a_j)$ and we get
$$h= x^{Na_i}y^{Na_j}(\unit x^{N(c_i-a_i)}y^{N(c_j-a_j)}-\unit x^{a_i}y^{a_j}) $$
and similarly for $\tilde h$.
 
\medskip
Write $g-f=(B-A)x^{a_i}y^{a_j}$ and $\tilde g-\tilde f=(\tilde B-\tilde A)x^{a_i}y^{a_j}$.

\medskip 
Let us consider two subcases of (III).

\smallskip
First, assume that $a_j=b_j>0$.  
 Then  $E_j$ is a component of the exceptional divisor, 
 since otherwise one of $a_j$, $b_j$ would vanish. 
 Since $A$ and $\tilde A$ are equal on the exceptional divisor, we have
 $A(0,y)=\tilde A(0,y)$ and $A(x,0)=\tilde A(x,0)$. The same equations hold for $B$ and $\tilde B$. 
 It follows that the Newton diagrams of $B-A$ and $\tilde B-\tilde A$ have the same intersection 
 points with the coordinate axes. Moreover, $B-A$ and $\tilde B-\tilde A$ are factors of $g-f$ or  $\tilde g - \tilde f$ and hence their Newton diagrams have only one vertex. 

If $(0,0)$ is the vertex of the Newton diagram $\Delta$ of $\tilde B-\tilde A$, 
then $c_i=\tilde c_i=a_i$ and $c_j=\tilde c_j=a_j$, what implies that $h=\unit x^{Na_i}y^{Na_j}$. 
If $(a,0)$ is the vertex of $ \Delta$ for some $a>0$, then $c_i=\tilde c_i>a_i$ and $c_j=\tilde c_j=a_j$, what means that $h= x^{(N+1)a_i}y^{a_j}(\unit x^{N(c_i-a_i)-a_i}-\unit y^{a_j}).$ Similarly we have in the case, when $(0,b)$ is the vertex of $\Delta$ for some $b>0$.  
The last possibility is, when the vertex of $\Delta$ is of the form $(a,b)$ for some $a,b>0$. 
In this case we have that $c_i>a_i$, $\tilde c_i>a_i$, $c_j>a_j$ and $\tilde c_j>a_j$. 
Thus $h=\unit x^{(N+1)a_i}y^{(N+1)a_j}.$

\smallskip
Next, assume that $a_j=b_j=0$. 
Then $E_j$ is a component of the proper transform of $(g-f)(\tilde g-\tilde f)=0$.  
Without loss of generality we may assume that $E_j\subset \{g-f=0\}$.
Then  $c_j>0$ and $\tilde c_j\ge0$. 
Write $B-A=\unit x^ay^{c_j}$, where $a=c_i-a_i$, and $\tilde B-\tilde A=\unit x^{\tilde a} 
y^{\tilde c_j}$, where $\tilde a=\tilde c_i- \tilde a_i$. 
Since $A|_{E_i}=\tilde A|_{E_i}$, we have $A(0,y)=\tilde A(0,y)$ and similarly 
$B(0,y)=\tilde B(0,y)$.  Therefore $a=0$ if and only if $\tilde a=0$ and if this is the case $c_j = 
\tilde c_j$.  Then $h=x^{Na_i}(\unit y^{Nc_j} - \unit x^{a_i})$ and a similar formula holds for $\tilde h$.  If $a$ and $\tilde a$ are both strictly positive then, by the assumption on $N$, $h=x^{(N+1)a_i}(\unit x^{N(c_i-a_i)-a_i}y^{Nc_j}-\unit)=\unit x^{(N+1)a_i}$. \end{proof}

\begin{Corollary}\label{C:sliding}
Let $f,g,u',u''\in\mathbb{C}\{x,y\}$ be convergent power series vanishing at zero 
such that $f$ and $g$ are coprime and let $\tilde f=(1+u')f$, $\tilde g=(1+u'')g$.
Then for positive integers $k$, $l$, $t\ne 0$ and sufficiently large integer $N$ 
the curves 
$h_t=(g^k-t f^l)^N-f^{l(N+1)}=0$ and 
 $\tilde h_t=(\tilde g^k-t \tilde f^l)^N-\tilde f^{l(N+1)}=0$
are equisingular.
\end{Corollary}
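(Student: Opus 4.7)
My plan is to reduce the Corollary to Lemma \ref{L:Key} by a change of functions. Set $F := f^l$ and $G := g^k$. Since $f$ and $g$ are coprime convergent power series vanishing at the origin, so are $F$ and $G$. The perturbations transport as well:
$$\tilde F = \tilde f^l = (1 + u_1) F, \qquad \tilde G = \tilde g^k = (1 + u_2) G,$$
where $u_1 := (1+u')^l - 1$ and $u_2 := (1+u'')^k - 1$ vanish at the origin. In these terms
$$h_t = (G - tF)^N - F^{N+1}, \qquad \tilde h_t = (\tilde G - t\tilde F)^N - \tilde F^{N+1},$$
which has exactly the shape treated in Lemma \ref{L:Key}, except that the coefficient of $F$ in the first factor is $t$ rather than $1$.

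The crux of the argument is therefore to verify that the proof of Lemma \ref{L:Key} goes through verbatim with an arbitrary $t \in \mathbb{C}^*$ in place of $1$. Inspecting that proof, the constant $1$ enters only in Case (III), when the orders of the analogues of $f$ and $g$ coincide along two intersecting components $E_i, E_j$ of a common resolution of $FG(G-tF)(\tilde G - t\tilde F) = 0$. In that case one factors out the common monomial and examines the Newton diagram of the resulting coefficient function $B_1 - tA_1$ (and $\tilde B_1 - t\tilde A_1$), where $A_1, B_1, \tilde A_1, \tilde B_1$ are the local units in $F, G, \tilde F, \tilde G$. The essential property preserved under the generalization is that $(B_1 - tA_1)|_{E_i} = (\tilde B_1 - t\tilde A_1)|_{E_i}$ on every exceptional component, which still holds because $u_1|_{E_i} = u_2|_{E_i} = 0$. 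Consequently the Newton diagrams of the two coefficient functions meet the coordinate axes at the same points, and the subcase analysis — depending on whether the vertex of that diagram lies at the origin, on an axis, or in the interior — is formally identical to the one in Lemma \ref{L:Key}.

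Once this slightly generalized form of Lemma \ref{L:Key} is in hand, applying it to $(F, G, u_1, u_2)$ yields the equisingularity of $h_t = 0$ and $\tilde h_t = 0$ for all sufficiently large $N$ via the Zariski criterion, exactly as in the Key Lemma. The main — and essentially only — obstacle is the bookkeeping to confirm that the case analysis of the Key Lemma is insensitive to the coefficient $t$; once this is checked, the Corollary follows at once.
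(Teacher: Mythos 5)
Your proposal is correct and follows essentially the same route as the paper: both reduce the Corollary to Lemma~\ref{L:Key} via the substitution $F=f^l$, $G=g^k$. The only difference is how the constant $t$ is handled --- the paper absorbs it into the first function, applying the Key Lemma to $f_1=tf^l$, $g_1=g^k$ (the resulting unit $t^{N+1}$ in front of $f^{l(N+1)}$ being harmless for the equisingularity analysis), whereas you keep $t$ explicit and re-check that the proof of the Key Lemma is insensitive to it; both ways work.
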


\begin{proof}
It is enough to apply Lemma~\ref{L:Key} to 
$f_1=t f^l$,  $g_1=g^k$ and 
$\tilde f_1=t\tilde f^l$, $\tilde g_1=\tilde g^k$.
\end{proof}

Now, we are ready to prove the main result.

\subsection{Proof of  Theorem~\ref{T:main}}\label{S:proof}
Let $D(u,v)=0$ be the discriminant  of $(f,g):(\mathbb{C}^2,0)\to(\mathbb{C}^2,0)$ 
and let $\tilde D(u,v)=0$ be the discriminant  
of $(\tilde f,\tilde g):(\mathbb{C}^2,0)\to(\mathbb{C}^2,0)$. 
Let $w=(k,l)$ be an arbitrary weight vector, where  $k$, $l$ are coprime positive integers.
Write $\ini_w D=C u^{\nu_0}v^{\nu_{n+1}}\prod_{i=1}^n(v^k-t_i u^l)^{\nu_i}$ and 
$\ini_w \tilde D=\tilde C u^{\eta_0}v^{\eta_{n+1}}\prod_{i=1}^n(v^k-t_i u^l)^{\eta_i}$. By \cite[Theorem~2.1]{Gw} 
the Newton diagrams of $D$ and $\tilde D$ coincide. Hence $\nu_0=\eta_0$, $\nu_{n+1}=\eta_{n+1}$ 
and it is enough to prove that $\nu_i=\eta_i$ for $1\leq i\leq n$.
This follows from Corollary~\ref{C:Milnor} since by Corollary~\ref{C:sliding}
for $t\neq0$ one has $\mu (h_{t})=\mu (\tilde h_{t})$.  This ends the proof. 


\section{Corollaries}\label{S:Corollaries}
\medskip

As a direct consequence we obtain the following result.

\begin{Corollary}
Under assumptions of Theorem~\ref{T:main} the discriminants of  $(f,g)$ and $(\tilde f,\tilde g)$
have the same tangents. 
\end{Corollary}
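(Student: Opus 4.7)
The plan is to deduce the corollary directly from Theorem~\ref{T:main} by recovering the tangent cone of a plane curve germ at the origin from the initial Newton polynomial of its equation.

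Recall that the tangents of $D(u,v)=0$ at the origin are the linear factors of the lowest-degree homogeneous part $D_m(u,v)$, where $m=\ord D$. The key step is to verify that $D_m$ is encoded in the initial Newton polynomial $D|_S$. For any nonzero monomial $a_{ij}u^iv^j$ of $D$ with $i+j=m$, the exponent $(i,j)$ lies on the supporting line $x+y=m$ of the Newton diagram $\Delta(D)$; this line meets $\Delta(D)$ either in a single vertex or along a compact edge of slope $-1$. In both cases, provided $\Delta(D)$ has at least one compact edge, the point $(i,j)$ lies in $S$, so $D_m$ is precisely the sub-polynomial of $D|_S$ consisting of its monomials of smallest total degree.

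Combining this with Theorem~\ref{T:main}, which gives $D|_S=\tilde D|_{\tilde S}$, yields $D_m=\tilde D_m$. Hence the tangent cones, and in particular the tangent lines, of $D=0$ and $\tilde D=0$ agree. The only case not covered is when $\Delta(D)$ consists of a single vertex (so $S$ is empty and $D|_S = 0$); here one invokes the fact, already used inside the proof of Theorem~\ref{T:main}, that the Newton diagrams of $D$ and $\tilde D$ coincide by \cite[Theorem~2.1]{Gw}. This forces $\tilde D$ to also have a single-vertex Newton diagram with the same vertex $(a,b)$, so the tangent cones of both curves are the coordinate axes with multiplicities $a$ and $b$.

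The only non-trivial ingredient here is the elementary convexity argument identifying $D_m$ inside $D|_S$; everything else is an immediate reading-off of data from Theorem~\ref{T:main}. I expect no real obstacle, since the geometry of the Newton diagram along the line $x+y=m$ is fully forced by minimality of $m$.
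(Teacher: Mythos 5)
Your proof is correct and is essentially the argument the paper has in mind: it states this corollary as a direct consequence of Theorem~\ref{T:main} without further detail, the implicit point being exactly your observation that the lowest-degree form $D_m$ is the restriction of the initial Newton polynomial to the supporting line of slope $-1$ (so equal initial Newton polynomials force equal tangent cones). Your separate treatment of the single-vertex case via the coincidence of the Newton diagrams (\cite[Theorem~2.1]{Gw}) is a legitimate and welcome completion of the degenerate case where $S=\emptyset$.
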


The next corollary concerns the atypical values of the pencils $g^k-tf^l=0$ and the asymptotic critical values of the meromorphic functions  $g^k/f^l$.

\begin{Corollary}
Under the assumptions of Theorem~\ref{T:main}, for every pair of coprime positive integers 
$k,l$ we have
\begin{itemize}
\item[{\rm (i)}]  the pencils $g^k-tf^l=0$ and $\tilde g^k-t \tilde f^l=0$,
where $t\in\mathbb{C}$ is a parameter, have the same sets of atypical values.  
\item[{\rm (ii)}] the meromorphic functions  $g^k/f^l$ and 
$\tilde g^k/\tilde f^l$ have the same asymptotic critical values. 
\item[{\rm (iii)}]  the generic 
fibers of $g^k/f^l$ and $\tilde g^k/\tilde f^l$ are equisingular. 
\end{itemize}
\end{Corollary}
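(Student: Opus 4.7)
The plan is to deduce all three parts from Theorem~\ref{T:main} by observing that each invariant in (i)--(iii) is controlled by the initial Newton polynomial of the discriminant. For each weight $w=(k,l)$ with $k,l$ coprime positive integers, Theorem~\ref{T:main} gives the equality of the factorizations (\ref{E:w}) for $D$ and $\tilde D$:
\[
\ini_w D \;=\; \ini_w \tilde D \;=\; C\,u^{\nu_0}v^{\nu_{n+1}}\prod_{i=1}^n(v^k-t_iu^l)^{\nu_i},
\]
so the roots $t_1,\ldots,t_n$ and the corner exponents $\nu_0,\nu_{n+1}$ are the same for $D$ and $\tilde D$. The task is therefore to express each invariant appearing in (i)--(iii) in terms of these data.

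For (i), I would invoke the classical description obtained via the projection formula applied to the target pencil $v^k-tu^l=0$ and the axes $u=0$, $v=0$: each root $t_i$ is the limit of $g^k/f^l$ along an irreducible branch of $\mathrm{Jac}(\phi)$, and the set $\{t_i\}\subset\mathbb{C}^*$ is precisely the set of atypical values in $\mathbb{C}^*$ of the pencil $g^k-tf^l=0$. The values $t=0$ and $t=\infty$ are atypical iff $\nu_{n+1}>0$, resp.\ $\nu_0>0$, equivalently iff the reduced curves $\{g=0\}$, resp.\ $\{f=0\}$, appear among the branches contributing to the discriminant. Since multiplying $f,g$ by units preserves both these reduced curves and the corner exponents $\nu_0,\nu_{n+1}$, (i) follows. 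For (ii), the asymptotic critical values of the meromorphic germ $g^k/f^l$ at the origin admit the same characterization (they are the Hironaka quotients attached to branches of $\mathrm{Jac}(\phi)$, together with $0,\infty$ coming from $f=0$, $g=0$), so the argument is identical.

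For (iii), I would apply Corollary~\ref{C:sliding} at a generic $t\in\mathbb{C}^*$ (in particular, outside the common atypical set produced by (i)) and for $N$ sufficiently large, obtaining that the curves $h_t=(g^k-tf^l)^N-f^{l(N+1)}$ and $\tilde h_t$ are equisingular. A Newton--Puiseux analysis at generic $t$ shows that the branches of $h_t=0$ arrange in clusters of size $N$, each cluster concentrated along one branch of $g^k-tf^l=0$ and obtained from it by perturbations of strictly higher weighted order; the equisingularity type of such a cluster (Puiseux pairs together with the pairwise intersection multiplicities, controlled by the exponent $l(N+1)/N$) determines the equisingularity type of the underlying branch of $g^k-tf^l=0$. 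The analogous decomposition holds for $\tilde h_t$, so the equisingularity of $h_t$ and $\tilde h_t$ descends to equisingularity of $g^k-tf^l=0$ and $\tilde g^k-t\tilde f^l=0$, giving (iii).

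The main obstacle is the classical identification used in (i) and (ii): while it is folklore in the polar case ($f=0$ smooth and transverse to $g=0$), in the general situation with both $f$ and $g$ singular it has to be reassembled from the projection formula and the results of Maugendre and Michel cited in the introduction. A secondary technical point is the Newton--Puiseux cluster analysis supporting (iii), which requires verifying that generic $t$ (avoiding a finite set of atypical values) and large $N$ are enough to guarantee that the equisingularity type of $h_t$ uniquely recovers that of $g^k-tf^l=0$.
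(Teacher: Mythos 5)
Your conclusions are right, but in all three parts the crux is left unproven, and in each case the paper has a shorter, self-contained route. For (i), the step you yourself flag as ``the main obstacle'' --- that the roots $t_1,\dots,t_n$ of $\ini_w D$ are exactly the atypical values of the pencil in $\mathbb{C}^*$ --- is the entire content of the claim, and deferring it to a reassembly of Maugendre's and Michel's results is not a proof. The paper obtains it in one line from Casas' formula (Theorem~\ref{Th:Casas}), which is already set up for this purpose: applying it to $H_t=v^k-tu^l$ expresses $\mu(g^k-tf^l)$ through $i_0(D,H_t)$, and the Zeuten-rule computation from the proof of Lemma~\ref{L:I} shows that $i_0(D,H_t)=\deg_w(\ini_w D)$ is constant for $t\notin\{t_1,\dots,t_n\}$ and strictly jumps at each $t_j$ with $\nu_j>0$; $\mu$-constancy for families of plane curve singularities then pins down the atypical set in terms of data carried by $\ini_w D$, which coincides for the tilded pair by Theorem~\ref{T:main}. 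For (ii) your argument inherits the same gap, and there is in addition a conceptual slip: the asymptotic critical values are not the Hironaka quotients $i_0(g,p)/i_0(f,p)$ (those are rational inclinations of Newton-diagram edges) but the limits of $g^k/f^l$ along branches of the Jacobian. More importantly, (ii) needs no discriminant theory at all: $\tilde g^k/\tilde f^l=u\cdot g^k/f^l$ with $u(0)=1$, and the defining condition $\|(x,y)\|\,\|\nabla F\|\to 0$, $F\to t_0$ is visibly insensitive to multiplication by such a unit --- this elementary observation is the paper's whole proof of (ii).

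For (iii) the gap is more serious. Equisingularity of $h_t$ and $\tilde h_t$ from Corollary~\ref{C:sliding} does not obviously ``descend'': the equisingularity type of $h_t=(g^k-tf^l)^N-f^{l(N+1)}$ is not a function of the type of $g^k-tf^l$ alone (it also depends on the contact of $f=0$ with the branches of the pencil member), so you must prove that the type of $h_t$ determines the type of its center curve and that this determination is consistent between the tilded and untilded data; you only assert this cluster analysis and acknowledge it as unverified. The paper sidesteps the issue entirely: it applies Casas' formula to the linear interpolation $s(g^k-tf^l)+(1-s)(\tilde g^k-t\tilde f^l)$. Each fiber of this family equals a unit times $U_s g^k-tV_s f^l$ with $U_s(0)=V_s(0)=1$, so by Theorem~\ref{T:main} all the relevant discriminants share the same initial Newton polynomial; Casas' formula then gives $\mu$ constant in $s$ for generic $t$, and $\mu$-constancy for plane curves yields equisingularity of the fibers at $s=0$ and $s=1$ directly.
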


\begin{proof}
To verify (i) we apply Casas' formula to $H_t=v^k-tu^l$ to show that these families are $\mu$-constant for $t\ne t_i$, $i=1, \ldots, n$. 

To show (ii) let us recall after \cite[Section 5]{KMP}, that $t_0\in\mathbb{C}\cup \infty$ 
is \emph{an assymptotic critical value} of $F=f / g$ at the origin if there is a sequence of points $(x_k,y_k)\xrightarrow{ { k \to \infty }} (0,0)$, $(x_k,y_k)\ne (0,0)$, 
such that $\|(x_k,y_k)\| \|\nabla F\| \to 0$ and $F(x_k,y_k) \to t_0$. 
Therefore if $\tilde F = u F$, with $u(0)=1$, then an elementary computation shows  that the asymptotic critical values of $\tilde F$ and $F$ coincide. 

Now we show the equivalence of (i) and (ii) thus providing for both of them alternative proofs.  That is we show that the set of asymptotic critical values of $F= f/g$ coincide with the atypical values $t$ of the pencil $f-tg$.  Indeed, an elementary computation, see \cite[Proposition 5.1]{KMP}, shows that $t_0$ is the asymptotic critical value if and only if the Kuo-Verdier condition (w) fails at $(0,0,t_0)$ for the strata $(Reg X, T)$, where $Reg X$ is the regular part of $X= V(f-tg)$ and $T$ is the $t$-axis. On the other hand it is well known that for families of isolated plane curve singularities that their equisingularity is equivalent to Whitney equisingularity, see \cite[Theorem 8.1]{ZarSEII}, and in the complex domain the Kuo-Verdier condition is equivalent to Whitney's conditions, see \cite{Te3}. For a direct elementary proof that $\mu$-constant condition is equivalent to Verdier condition for the families of curve singularities $f-tg$, see \cite[Theorem 4.1]{KP2000}. 

Finally, to show (iii) we may again apply Casas' formula to the deformations 
 $s (g^k-tf^l) + (1-s)(\tilde g^k-t \tilde f^l)$ with the parameter $s\in \bC$.
\end{proof}

\begin{Corollary}\label{Cor:rescalling}
Let $(f,g)\colon (\mathbb{C}^2,0)\longrightarrow (\mathbb{C}^2,0)$ be a holomorphic 
mapping with an isolated zero. Then the initial Newton polynomial of its discriminant
is determined, up to rescalling variables, by the ideals $(f)$ and $(g)$ in $\mathbb{C}\{x,y\}$. 
\end{Corollary}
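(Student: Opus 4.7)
My approach is to reduce the statement to Theorem~\ref{T:main} together with an elementary analysis of how multiplication of the components by nonzero constants affects the discriminant. First, two map germs $(f_1,g_1)$ and $(f_2,g_2)$ generate the same pair of ideals in $\mathbb{C}\{x,y\}$ precisely when there exist units $v,w\in\mathbb{C}\{x,y\}^{*}$ with $f_2=vf_1$ and $g_2=wg_1$. Setting $a=v(0)$ and $b=w(0)$, I would write $v=a(1+u')$ and $w=b(1+u'')$ with $u',u''$ vanishing at the origin, so that $(f_2,g_2)=((1+u')(af_1),(1+u'')(bg_1))$.

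Next I would apply Theorem~\ref{T:main} to the pair $(af_1,bg_1)$ together with its perturbation $(f_2,g_2)$; coprimality of $af_1$ and $bg_1$ is immediate from the hypothesis that $(f_1,g_1)$ has an isolated zero. The theorem then delivers that the initial Newton polynomials of the discriminants of $(af_1,bg_1)$ and of $(f_2,g_2)$ agree exactly.

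It remains to compare the discriminants of $(f_1,g_1)$ and $(af_1,bg_1)$. Here I would observe that multiplying the components by the scalars $a$ and $b$ amounts to post-composing the map with the target linear automorphism $L(u,v)=(au,bv)$. The Jacobian curve in the source is therefore unchanged (the Jacobian determinant is only multiplied by $ab$), and for each irreducible branch of the Jacobian the image under $(af_1,bg_1)$ is obtained from its image under $(f_1,g_1)$ by applying $L$, while the topological degree of the restriction to that branch is preserved. The defining properties of the direct image recalled in the introduction then give that the discriminant of $(af_1,bg_1)$ equals $D_{(f_1,g_1)}(u/a,v/b)$ up to a unit, so its initial Newton polynomial is obtained from that of $D_{(f_1,g_1)}$ precisely by the rescaling $(u,v)\mapsto(u/a,v/b)$. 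Chaining this with the previous step yields the corollary. The only point requiring slight care is the bookkeeping in this last target rescaling, but it is routine; the conceptual content is entirely carried by Theorem~\ref{T:main}.
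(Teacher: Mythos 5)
Your proposal is correct and takes essentially the same route as the paper: both reduce to Theorem~\ref{T:main} applied to $(af,bg)$ and $((1+u')af,(1+u'')bg)$, and then observe that multiplying the components by the constants $a,b$ amounts to composing with a linear automorphism of the target, hence only rescales the variables of the discriminant. Your more explicit bookkeeping via the direct image yields the rescaling in the form $D_{(af,bg)}(u,v)=D_{(f,g)}(u/a,v/b)$ up to a unit, which is in fact the correct direction (the paper writes $D(au,bv)=D_1(u,v)$, with the scaling inverted), but the discrepancy is immaterial since the conclusion is only ``up to rescalling variables.''
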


\begin{proof}
Let $u_1, u_2\in\mathbb{C}\{x,y\}$ be power series  with nonzero constant terms. 
Let $a=u_1(0,0)$ and $b=u_2(0,0)$. If $D$ (resp. $D_1$) is the discriminant of 
$(f,g)\colon (\mathbb{C}^2,0)\longrightarrow (\mathbb{C}^2,0)$ 
(resp. $(af,bg)\colon (\mathbb{C}^2,0)\longrightarrow (\mathbb{C}^2,0)$) then  
$D(au,bv)=D_1(u,v)$. 
Hence by Theorem~\ref{T:main} applied to $(af,bg)$ and $(u_1f,u_2g)$,
the initial Newton polynomials of the discriminants of $(f,g)$ and $(u_1f,u_2g)$ are equal 
up to rescalling variables.
\end{proof}

\medskip
As an application of the methods used in this paper we present a new proof of  
\cite[Theorem 6.6]{GaGwLe}.

\begin{Theorem} [{\cite[Theorem 6.6]{GaGwLe}}] \label{tc3}
Let $h=0$ be a unitangent singular curve and let $\ell_1=0$, $\ell_2=0$ be smooth curves 
transverse to $h=0$. Then there exists a nonzero constant $d \in\bC$ such that  
the initial Newton polynomials of discriminants of mappings  
$(d\ell_1,h):(\bC^2,0)\to(\bC^2,0)$ and $(\ell_2,h):(\bC^2,0)\to(\bC^2,0)$ are 
equal. 
\end{Theorem}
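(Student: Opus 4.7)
The plan is to mirror the proof of Theorem \ref{T:main}. First choose local coordinates $(x,y)$ so that $\ell_2 = y$ and the common tangent of $h$ is $\{x=0\}$; transversality of $\ell_1$ to $h$ forces the coefficient $\beta$ of $y$ in $\ell_1$ to be nonzero, and I set $d := 1/\beta$, giving $d\ell_1 = y + cx + (\text{terms of order} \geq 2)$ for some $c \in \bC$. Since $d\ell_1$ and $\ell_2$ are both smooth and transverse to every branch of the unitangent curve $h$, the pairs $(d\ell_1, h)$ and $(\ell_2, h)$ are equisingular, and by \cite[Theorem 2.1]{Gw} the Newton diagrams of the discriminants $D$ and $\tilde D$ coincide.

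As in the proof of Theorem \ref{T:main}, to conclude that $\ini_w D = \ini_w \tilde D$ for every weight $w = (k,l)$ with coprime positive $k,l$, it suffices by Corollary \ref{C:Milnor} to show $\mu(h_t) = \mu(\tilde h_t)$ for every $t\in\bC^*$ and $N$ sufficiently large, where
$$ h_t := (h^k - t(d\ell_1)^l)^N - (d\ell_1)^{l(N+1)}, \qquad \tilde h_t := (h^k - t\ell_2^l)^N - \ell_2^{l(N+1)}. $$
To establish this, I adapt the argument of Lemma \ref{L:Key} by taking a common resolution $R \colon M\to (\bC^2, 0)$ of the product $d\ell_1\cdot\ell_2\cdot h\cdot (h^k - t(d\ell_1)^l)\cdot (h^k - t\ell_2^l)$. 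On the first exceptional component three distinguished points appear: $P_h$ (direction $\{x=0\}$), $P_{\ell_2}$ ($\{y=0\}$), and $P_{d\ell_1}$ ($\{y+cx=0\}$; equal to $P_{\ell_2}$ when $c=0$). In the chart $x = vy$ near $P_h$, the normalization provides the factorization $d\ell_1 = (1 + u') \ell_2$ with $u' = cv + O(y)$ vanishing at $P_h$, so the local analysis of Lemma \ref{L:Key} applies verbatim on the entire $P_h$-cluster of $R$, where all the non-trivial resolution data of $h$ is concentrated.

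The hard part will be the analysis over the remaining clusters at $P_{\ell_2}$ and $P_{d\ell_1}$, since there $h$ is a unit times a monomial in the exceptional equations and the curves $h_t, \tilde h_t$ locally degenerate in asymmetric ways (one of $d\ell_1, \ell_2$ has a proper transform while the other is a unit). The key observation --- which pins down $d$ as the correct rescaling --- is a symmetric pairing: in suitably normalized local coordinates at $P_{d\ell_1}$, the translation taking the proper transform of $d\ell_1$ to the model position of the proper transform of $\ell_2$ at $P_{\ell_2}$ realizes an analytic equivalence between the germ of $h_t$ at $P_{d\ell_1}$ and the germ of $\tilde h_t$ at $P_{\ell_2}$, while the germ of $h_t$ at $P_{\ell_2}$ (where $d\ell_1$ is a unit) reduces to a power of the exceptional equation times a unit, matching the analogously trivial germ of $\tilde h_t$ at $P_{d\ell_1}$. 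Combining these pairings with the $P_h$-cluster matching and invoking the Zariski criterion \cite{ZarSEI}, one obtains $\mu(h_t) = \mu(\tilde h_t)$, and the theorem follows.
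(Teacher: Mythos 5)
Your overall architecture coincides with the paper's: the same choice of $d$ (the limit of $\ell_2/\ell_1$ along the tangent of $h$), the reduction via \cite[Theorem 2.1]{Gw} and Corollary~\ref{C:Milnor} to the equality $\mu(h_t)=\mu(\tilde h_t)$, and the verification of that equality by matching local singularity types on a common resolution, with the cluster over the tangent direction of $h$ handled by the argument of Lemma~\ref{L:Key} because $\ell_2/(d\ell_1)\to 1$ there. The paper packages exactly this resolution analysis as Lemma~\ref{L:Key1}.

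The gap is in your final paragraph. First, the branches of $h_t=0$ and $\tilde h_t=0$ are \emph{not} all concentrated at the three points $P_h$, $P_{\ell_2}$, $P_{d\ell_1}$. Writing $m$ for the multiplicity of $h$: when $mk=l$ (which coprimality forces to be $k=1$, $l=m$), the tangent cone of $h^k-t(d\ell_1)^l$ is $c_0x^{m}-t(y+cx)^{m}$, which for $t\neq0$ splits into $m$ distinct directions, generically none of them equal to the directions of $x$, $y$, or $y+cx$. Hence $h_t$ (resp.\ $\tilde h_t$) acquires nontrivial singular germs at $m$ further points of the first exceptional component, and these points sit at \emph{different} locations for $h_t$ and for $\tilde h_t$; your pairing says nothing about them. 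The paper handles precisely this in the middle of the proof of Lemma~\ref{L:Key1}: the restriction of $g/f$ to the exceptional $\mathbb{P}^1$ has a single zero (at $Q$, of order $b$) and a single pole (at $P$), so every nonzero value is a regular value with exactly $b$ simple preimages, at each of which the local equation is $x^{Na}(y^N-\unit x^a)$; since the count and the local type agree for the two maps, the matching goes through even though the locations do not. Some such counting argument is indispensable for your proof to close. Second, your pairing of the germs at $P_{d\ell_1}$ and $P_{\ell_2}$ is justified only by the claim that ``a translation realizes an analytic equivalence''; a translation cannot reconcile the different unit factors occurring in the two germs, and analytic equivalence is both unproved and unnecessary. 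What the paper actually shows is that, after normalizing local coordinates, both germs take the explicit form $x^{Nb}\bigl[(y^b-x^{a-b})^N-x^{(a-b)N+a}\bigr]$ (in the case $a>b$), whose \emph{equisingularity} type depends only on $a,b,N$ --- and equisingularity is all that Zariski's criterion requires. You should replace the translation argument by such a normalization (or by any direct verification that the two germs are equisingular).
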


Keep the assumptions of Theorem~\ref{tc3} and let $d$ be the limit at the origin of the meromorphic 
function $\ell_2/\ell_1$ restricted to the tangent to the curve $h=0$. Fix positive integers $k$, $l$ and
a nonzero complex constant $t$.  Let $f=t h^l$,  $g=(d\ell_1)^k$, and  $\tilde g=\ell_2^k$. 
In order to prove Theorem~\ref{tc3} it is enough to have the following counterpart of 
Lemma~\ref{L:Key}.

\begin{Lemma} \label{L:Key1}
For sufficiently large integer $N$ the curves 
$(g-f)^N-f^{N+1}=0$ and $(\tilde g- f)^N-f^{N+1}=0$ are equisingular.
\end{Lemma}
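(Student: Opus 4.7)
The plan is to follow the strategy of Lemma~\ref{L:Key}: take a common resolution $R:M\to(\mathbb{C}^2,0)$ of the singularities of $fg\tilde g(g-f)(\tilde g-f)=0$, and apply Zariski's equisingularity criterion. Denote as before by $a_i,b_i,\tilde b_i,c_i,\tilde c_i$ the orders of $f,g,\tilde g,g-f,\tilde g-f$ along a component $E_i$ of the total transform, and choose $N$ large enough that $Nc_i>(N+1)a_i$ whenever $c_i>a_i$, with the analogous inequality for $\tilde c_i$. The goal is to show that at every point of the total transform the local equations of $h=(g-f)^N-f^{N+1}$ and $\tilde h=(\tilde g-f)^N-f^{N+1}$ have equivalent combinatorial structure.

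The crucial new ingredient is the role of the constant $d$. Because $h=0$ is unitangent with tangent line $T$, after the first blow-up the proper transform of $h$ meets the exceptional divisor $E_1$ only at the point $P_h$ corresponding to $T$, and all further blow-ups needed to resolve $h$ take place above $P_h$. Since $\ell_1,\ell_2$ are smooth transverse to $h$, their strict transforms meet $E_1$ at points $P_{\ell_1},P_{\ell_2}$ distinct from $P_h$, and above $P_h$ both $\ell_1,\ell_2$ pull back to unit multiples of a local equation of $E_1$, forcing $b_i=\tilde b_i$ there. The defining property of $d$ is that $\ell_2-d\ell_1$ vanishes on $T$; its pullback therefore has positive order along every exceptional component $E_i$ lying above $P_h$. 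Consequently, $\tilde g/g=(\ell_2/(d\ell_1))^k$ restricts to the constant $1$ on each such $E_i$, which is the exact analog of the conditions $u'|_{E_i}=u''|_{E_i}=0$ used in the proof of Lemma~\ref{L:Key}. The case-by-case analysis of that lemma then runs unchanged at every intersection point above $P_h$, matching the local equations of $h$ and $\tilde h$ up to units.

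The new phenomenon, absent from Lemma~\ref{L:Key}, occurs at the points $P_{\ell_1}$ and $P_{\ell_2}$ on $E_1$, where $\tilde g/g$ is not identically $1$. At $P_{\ell_1}$, $g$ vanishes on the proper transform of $\ell_1$ while $\tilde g$ does not, so $g-f$ acquires an extra Puiseux-type singularity of the form $y^k-(\text{unit})\,x^{ml-k}$ (where $m=\mathrm{mult}_0(h)$), while $\tilde g-f$ is a monomial times a unit; by a symmetric calculation, the roles are exchanged at $P_{\ell_2}$. The resolutions of $g-f$ above $P_{\ell_1}$ and of $\tilde g-f$ above $P_{\ell_2}$ are combinatorially identical, being determined by the exponents $k$ and $ml$ alone. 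This symmetry yields a bijection between the branches of $h$ above $P_{\ell_1}$ and the branches of $\tilde h$ above $P_{\ell_2}$ with equivalent resolution data, completing the bijection required by Zariski's criterion. The main obstacle is verifying this symmetry rigorously, by running the case analysis of Lemma~\ref{L:Key} in the resolution tower above each point and checking that at every intersection point the local equations of $h$ and $\tilde h$ correspond under the swap $P_{\ell_1}\leftrightarrow P_{\ell_2}$; this should follow from the fact that the Puiseux type $y^k-(\text{unit})\,x^{ml-k}$ depends only on the invariants $k,m,l$ intrinsic to $h$, not on the particular smooth transverse curve used.
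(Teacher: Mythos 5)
Your strategy coincides with the paper's: blow up the origin, apply Zariski's criterion, run the Key Lemma's case analysis over the point $P$ of the first exceptional divisor $E$ determined by the tangent of $h$, and compare local types directly at the remaining points of $E$. Your key observation about $d$ is the right one, although the justification is slightly off: what matters is not that $\ell_2-d\ell_1$ ``vanishes on $T$'' or has positive order along the components above $P$ (so does $\ell_1$ itself), but that $\ell_2/(d\ell_1)-1$ is a regular function vanishing at $P$, hence identically zero on every exceptional component lying over $P$; your conclusion that $\tilde g/g\equiv 1$ there is nevertheless correct, and your swap symmetry between $P_{\ell_1}$ and $P_{\ell_2}$ agrees with the paper's explicit computation at $Q$ and $\tilde Q$.

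There are, however, two genuine gaps in your analysis along $E$ itself, where $\tilde g/g$ is no longer identically $1$. First, the claim that the only new phenomenon occurs at $P_{\ell_1}$ and $P_{\ell_2}$ fails when $k=l\cdot\mathrm{mult}_0(h)$, i.e.\ when $\ord_E g=\ord_E f$: then the proper transform of $g-f=0$ meets $E$ at $k$ additional points (the fibre of $g/f|_E$ over $1$), and that of $\tilde g-f=0$ at $k$ generally different points, and one must match the resulting local types $x^{Nk}(y^N-c\,x^{k})$ across these two point sets; the paper does this by noting that $g/f|_E$ has a single zero at $Q$ and a single pole at $P$, both of order $k$, so by Riemann--Hurwitz the value $1$ is regular and all these intersections are transverse. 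Second, at the single point where $E$ meets the exceptional locus of the resolution over $P$, the Key Lemma's case analysis cannot be invoked as it stands, because case (III) of that proof uses $\tilde A|_{E}=A|_{E}$, which is false on $E$; one must instead rule out case (III) at that point, which the paper does by observing that the orders of the zeros and poles of $g/f$ restricted to $E\cong\mathbb{P}^1$ sum to zero, forcing the order vectors of $f$ and of $g$ at that point to differ, so that only the monomial cases (I) and (II) occur. Both gaps are repairable, and with them filled your argument becomes essentially the paper's proof.
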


\begin{proof}
Let $\sigma: M\to(\bC^2,0)$ be the blowing-up of $\bC^2$ at the origin.
Then the proper transforms of the curves $f=0$, $g=0$, $\tilde g=0$ 
intersect the exceptional divisor $E$ at points $P$, $Q$, $\tilde Q$ respectively
(if $g=0$ and $\tilde g=0$ have the same tangent, then $Q=\tilde Q$). 

Let $R:M_1 \to M$ be a resolution of singularities of the curve $fg(g-f)(\tilde g- f)=0$ 
at $P$. By Zariski equisingularity criterion it is enough to prove that 
the total and the proper transforms of the curves $(g-f)^N-f^{N+1}=0$ and $(\tilde g-f)^N- f^{N+1}=0$ 
by $R\circ\sigma$ are equisingular.  To prove the equisigularity on $R^{-1}(P)\setminus E$ 
it is enough to use the arguments from the proof of Lemma~\ref{L:Key} 
since the meromorphic function $\tilde g/g$ is constant and equal to 1 on the set $R^{-1}(P)$.
Thus it remains to show that the equisingularity classes of these curves are the same 
on the component $E$ of the exceptional divisor. 

The point $P$ is the intersection of $E$ and another component $E'$ of the exceptional divisor 
of $R\circ\sigma$. Choose a local analytic coordinate system $(x,y)$ centered at $P$ such that 
$E$ has the equation $x=0$ and $E'$ has the equation $y=0$. 
In these coordinates 
$f=\unit x^{a}y^{a'}$, 
$g=\unit x^{b}y^{b'}$, 
$\tilde g= \tilde \unit x^{b}y^{b'}$, and
$g-f=\unit x^{c}y^{c'}$.
The set $\{(a,a'), (b,b'), (c,c')\}$ is again totally ordered as in the proof of Key Lemma, 
and two of these pairs are equal and are less than or equal to the third one.

First, consider the case $a=b$.  Then the meromorpic function $g/f$ restricted to $E$ is nonzero 
on $E\setminus \{P,Q\}$, has order $b'-a'$ at $P$ and has order $b$ at $Q$.  Since the 
sum of orders of a meromorphic function defined on a Riemann manifold at zeroes and poles must be zero, we get $b+(b'-a')=0$ which implies that $b'<a'$.  It follows that $(a,a')\neq (b,b')$. 

If $(a,a') < (b,b')$ then $(c,c')=(a,a')$ and we get  
$$ (g-f)^N-f^{N+1} = \unit x^{N a}y^{N a'}.$$

If $(b,b') < (a,a')$ then $(c,c')=(b,b')$ and we get  
$$ (g-f)^N-f^{N+1} = \unit x^{N b}y^{N b'}.$$

Now, we will determine the class of equisingularity of $(g-f)^N-f^{N+1}=0$
at points of $E$ different from $P$ and $Q$. 
Choose a local analytic coordinate system $(x,y)$ centered at one of such a point such that
$E$ has the equation $x=0$. Then $f=\unit x^{a}$ and $g=\unit x^{b}$.

If $a<b$ then  
$$ (g-f)^N-f^{N+1} = \unit x^{N a}.$$

If $b<a$ then 
$$ (g-f)^N-f^{N+1} = \unit x^{N b}.$$

If $a=b$ then the meromorphic function $F=g/f$ restricted to $E$ has exactly one zero of order $b$
at $Q$ and exactly one pole at $P$. Hence every nonzero complex number, in particular 1, 
is a regular value of $F|_E$.  As a consequence the set $(F|_E)^{-1}(1)$ consists of $b$ points $P_i$, $1\leq i\leq b$ and at each of these points the curve $F=1$ intersects $E$ transversally. 
Thus, for every $i\in\{1,\dots b\}$ we may find a local analytic coordinate system $(x,y)$ centered 
at $P_i$ such that $g-f=x^{a}y$.  We get  in the neighborhood of $P_i$ a local equation
$$ (g-f)^N-f^{N+1} = x^{N a}(y^N-\unit x^a). $$

Finally, we will determine the equisingularity class of $(g-f)^N-f^{N+1}=0$ at $Q$. 
Choose a local analytic coordinate system $(x,y)$ centered at $Q$ such that $f=x^a$
and $g=x^b y^b$ in the neighborhood of $Q$. 

If $a\leq b$ then 
$$ (g-f)^N-f^{N+1} = \unit x^{N a}. $$

If $a>b$ then 
$$ (g-f)^N-f^{N+1} = x^{N b}[(y^b-x^{a-b})^N - x^{(a-b)N+a}] $$ and its equisingularity type is uniquely determined by $a,b,N$.\end{proof}

\begin{Corollary}\label{Cor:rescalling2}
Let $f=0$ be a unitangent singular curve and let $\ell=0$ be a smooth curve
transverse to $f=0$. Then the initial Newton polynomial of the discriminant of the mapping
$(\ell,f):(\bC^2,0)\to(\bC^2,0)$ is determined, up to rescalling variables by the ideal 
$(f)\subset\mathbb{C}\{x,y\}$.
\end{Corollary}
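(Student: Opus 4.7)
The plan is to reduce Corollary \ref{Cor:rescalling2} to a chain of two previously established equalities, separating the comparison into one step that changes the smooth transverse curve $\ell$ and another that changes the generator of the ideal $(f)$. Concretely, suppose $(\ell_1, f_1)$ and $(\ell_2, f_2)$ are two presentations satisfying the hypotheses of the corollary with $(f_1) = (f_2)$ as ideals in $\mathbb{C}\{x,y\}$; the goal is to show that the initial Newton polynomials of the discriminants of the mappings $(\ell_1, f_1)$ and $(\ell_2, f_2)$ coincide up to rescalling the target variables. The intermediate pair $(\ell_2, f_1)$ will act as a bridge.

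First, I would feed $h := f_1$ (which is unitangent and singular, this being a property of the curve defined by the ideal $(f)$, not of the particular generator) together with the smooth transverse curves $\ell_1, \ell_2$ into Theorem \ref{tc3}. This yields a nonzero constant $d \in \mathbb{C}^*$ such that the initial Newton polynomials of the discriminants of $(d\ell_1, f_1)$ and $(\ell_2, f_1)$ are equal. As recorded in the proof of Corollary \ref{Cor:rescalling}, replacing the first component by $d\ell_1$ merely substitutes $du$ for $u$ in the discriminant, which is a rescalling of the first target variable. Composing these two observations gives that the initial Newton polynomials of the discriminants of $(\ell_1, f_1)$ and $(\ell_2, f_1)$ agree up to rescalling.

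Next I would apply Corollary \ref{Cor:rescalling} directly to $(\ell_2, f_1)$ and $(\ell_2, f_2)$. The hypotheses are satisfied because $\ell_2$ is smooth and transverse to the common curve $\{f_1 = 0\} = \{f_2 = 0\}$, so $\ell_2$ and $f_i$ are coprime and the mappings have isolated zero. Since the component ideals $(\ell_2)$ and $(f_1) = (f_2)$ are the same in the two presentations, the corollary gives that the initial Newton polynomials of the two discriminants agree up to rescalling. Chaining the two agreements proves Corollary \ref{Cor:rescalling2}.

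I do not anticipate any significant obstacle: the argument is bookkeeping on top of Theorem \ref{tc3} and Corollary \ref{Cor:rescalling}, whose proofs contain all the real content. The only mild points to verify are that the rescalling-variables relation is transitive (it is, being the action of $(\mathbb{C}^*)^2$ on $\mathbb{C}\{u,v\}$), and that the hypotheses of the two quoted results — the unitangent singular condition on $h$ in Theorem \ref{tc3}, and the isolated-zero (equivalently coprimality) condition in Corollary \ref{Cor:rescalling} — depend solely on the ideals involved, so that one can freely interchange $f_1$ and $f_2$ when invoking them.
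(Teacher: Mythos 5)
Your proposal is correct and matches the paper's intended argument: the paper leaves the proof of Corollary~\ref{Cor:rescalling2} implicit, but it is placed immediately after Theorem~\ref{tc3} precisely so that one chains that theorem (to change the smooth transverse curve $\ell$, absorbing the constant $d$ as a rescalling of the first target variable as in the proof of Corollary~\ref{Cor:rescalling}) with Corollary~\ref{Cor:rescalling} itself (to change the generator of the ideal $(f)$). Your verifications that the hypotheses depend only on the ideals and that the rescalling relation is transitive are the right points to check, and they hold.
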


\medskip
\noindent
{\small   Beata Gryszka\\
Institute of Mathematics\\
Pedagogical University of Cracow\\
Podchor\c{a}\.{z}ych 2\\
PL-30-084 Cracow, Poland\\
e-mail: bhejmej1f@gmail.com}

\medskip
\noindent
{\small   Janusz Gwo\'zdziewicz\\
Institute of Mathematics\\
Pedagogical University of Cracow\\
Podchor\c{a}\.{z}ych 2\\
PL-30-084 Cracow, Poland\\
e-mail: janusz.gwozdziewicz@up.krakow.pl}

\medskip
\noindent
{\small   Adam Parusi\'nski\\
Universit\'e C\^ote d'Azur\\
CNRS,  LJAD, UMR 7351\\
06108 Nice, France\\
e-mail: adam.parusinski@univ-cotedazur.fr}

\end{document}